\documentclass[a4paper,10pt]{article}

\usepackage[utf8x]{inputenc}
\usepackage[russian]{babel}

\usepackage[active]{srcltx}

\usepackage{amsmath}
\usepackage{amssymb}
\usepackage{amsthm}
\usepackage{euscript}
\usepackage{bussproofs}

\frenchspacing
\righthyphenmin=2

%opening
\title{Об усилении теоремы Котлярского}
\author{Е.\,В. Дашков}

\begin{document}
\theoremstyle{plain}
\newtheorem{lemma}{Лемма}[section]
\newtheorem{theorem}[lemma]{Теорема}
\newtheorem{cor}[lemma]{Следствие}
\newtheorem*{stat}{Утверждение}
\theoremstyle{definition}
\newtheorem{rem}[lemma]{Замечание}
\theoremstyle{plain}

\renewcommand{\bfdefault}{b}
\renewcommand{\phi}{\varphi}
\newcommand{\T}{\mathsf{T}}
\newcommand{\lng}{\EuScript{L}}
\newcommand{\St}{\mathrm{St}_{\lng}}
\newcommand{\At}[1]{At_{\EuScript{L}}({#1})}
\newcommand{\dm}[1]{\langle #1 \rangle}
\newcommand{\gn}[1]{\left\ulcorner {#1} \right\urcorner}
\newcommand{\sys}[1]{\ensuremath{\mathbf{#1}}}
\newcommand{\glpl}{\sys{GLP_+^{=}(\mathrm \Lambda)}}
\newcommand{\con}[1]{\mathrm{Con}({#1})}
\newcommand{\ocn}[2]{\omega\text{-}\mathrm{Con}^{#2}({#1})}
\newcommand{\RFN}[3]{\mathrm{RFN}^{#3}_{#1}({#2})}
\newcommand{\imp}{\mathrel{{\dot\to}}}
\newcommand{\eq}{\mathrel{{\dot\leftrightarrow}}}
\newcommand{\all}{\dot\forall}
\newcommand{\ex}{\dot\exists}
\newcommand{\Ng}{\dot\neg}
\newcommand{\Dj}{\dot\vee}
\newcommand{\cj}{\dot\wedge}
\newcommand{\ovr}[1]{\overline{{#1}\strut}}

\maketitle

\begin{abstract}
Kotlarski's theorem\footnote{H. Kotlarski. \textit {Bounded Induction and Satisfaction Classes.} Mathematical Logic Quarterly, vol. 32, {\bf 31-34},
1986, P. 531--544.} formalized in $\sys{WKL}_0$. The English version will follow soon.
\end{abstract}

Пусть $S$ есть некоторое элементарно аксиоматизируемое расширение теории $\sys{PRA}$ в языке $\lng$. Для $n \in \omega$ определим формулы $\Gamma^S_n$, выражающие выводимость в теории $S$ с $n$-кратным применением $\omega$-правила.
$$\begin{array}{rcl}
\Gamma^S_0[\alpha] &\eqcirc& \Box_S[\alpha];\\
\Gamma^S_{n+1}[\alpha] &\eqcirc& \exists \psi\, (\forall y\, \Gamma^S_n[\psi] \wedge \Gamma^S_0[\forall y\, \psi \to \alpha]).
\end{array}
$$

\begin{lemma}\label{gamma}\rule{1pt}{0mm} Для всех $n \in \omega$ верно
\begin{enumerate}
\item Если $S \vdash \phi$, то $\sys{PRA} \vdash \Gamma^S_0[\phi]$;
\item $\sys{PRA} \vdash \forall \alpha\, (\Gamma^S_n[\phi] \to \Gamma^S_{n+1}[\alpha])$;
\item $\sys{PRA} \vdash \forall \alpha \forall \beta\, (\Gamma^S_n[\alpha \to \beta] \to (\Gamma^S_n[\alpha] \to \Gamma^S_n[\beta]))$.
\end{enumerate}
\end{lemma}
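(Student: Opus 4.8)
The plan is to read the three items as the basic necessitation-and-distribution (``derivability'') conditions for the operators $\Gamma^S_n$ and to prove them in the order 1, 2, 3, since the later items reuse the earlier ones. Item 1 is just provable $\Sigma_1$-completeness of $\sys{PRA}$ applied to the base operator. Since $S$ is elementarily axiomatizable, its proof predicate $\mathrm{Prf}_S(d,x)$ is primitive recursive and $\Gamma^S_0[\alpha]=\Box_S[\alpha]$ is $\Sigma_1$. If $S\vdash\phi$, I fix a concrete derivation $d_0$; then $\mathrm{Prf}_S(\overline{d_0},\gn{\phi})$ is a true quantifier-free sentence, so $\sys{PRA}$ proves it and hence proves $\Box_S[\gn{\phi}]=\Gamma^S_0[\phi]$. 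No induction on $n$ enters here.

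For item 2 (which I read as the monotonicity statement $\Gamma^S_n[\alpha]\to\Gamma^S_{n+1}[\alpha]$) I would use a witness $\psi$ not depending on $y$. Working inside $\sys{PRA}$, assume $\Gamma^S_n[\alpha]$ and put $\psi:=\alpha$, i.e. the code of $\alpha$ regarded as a formula with a dummy variable $y$. Then $\forall y\,\Gamma^S_n[\psi]$ reduces to $\Gamma^S_n[\alpha]$, which holds; it remains to verify $\Box_S[\forall y\,\alpha\to\alpha]$. Since $\forall y\,\alpha\to\alpha$ is logically valid for every $\alpha$, I need here the formalized (uniform in $\alpha$) version of item 1, namely $\sys{PRA}\vdash\forall\alpha\,\Box_S[\forall y\,\alpha\to\alpha]$, which again follows from provable $\Sigma_1$-completeness applied to the primitive-recursive map sending $\alpha$ to a canonical $S$-proof of $\forall y\,\alpha\to\alpha$. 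Discharging the existential over $\psi$ then gives $\Gamma^S_{n+1}[\alpha]$.

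For item 3 I argue by external induction on $n$. The base case $n=0$ is the second derivability condition (distributivity of $\Box_S$), proved in $\sys{PRA}$ by formalizing ``concatenate the given $S$-proofs of $\alpha\to\beta$ and of $\alpha$ and append one modus ponens.'' For the inductive step, suppose $\Gamma^S_{n+1}[\alpha\to\beta]$ and $\Gamma^S_{n+1}[\alpha]$ with witnesses $\psi_1,\psi_2$, so that $\forall y\,\Gamma^S_n[\psi_i]$ together with $\Box_S[\forall y\,\psi_1\to(\alpha\to\beta)]$ and $\Box_S[\forall y\,\psi_2\to\alpha]$. I take the combined witness $\psi:=\psi_1\wedge\psi_2$, the $y$-indexed conjunction of the two codes. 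The first conjunct $\forall y\,\Gamma^S_n[\psi]$ needs closure of $\Gamma^S_n$ under $\wedge$-introduction, which I extract from the induction hypothesis (level-$n$ distribution) together with items 1 and 2: the tautology $\psi_1\to(\psi_2\to\psi_1\wedge\psi_2)$ is $S$-provable, hence $\Gamma^S_n$-provable, and two applications of level-$n$ distribution yield $\Gamma^S_n[\psi_1\wedge\psi_2]$. The second conjunct $\Box_S[\forall y\,\psi\to\beta]$ follows from the two given $\Box_S$-facts by the base-case distributivity, since $S$ proves $(\forall y\,(\psi_1\wedge\psi_2))\to\beta$ from $\forall y\,\psi_1\to(\alpha\to\beta)$ and $\forall y\,\psi_2\to\alpha$.

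The main obstacle is the bookkeeping in this inductive step: one must present the $y$-indexed operations on codes (numeral substitution and conjunction) as genuine primitive-recursive functions and check, provably in $\sys{PRA}$, that the $\Box_S$- and $\Gamma^S_n$-closure conditions survive them uniformly in $y$. Isolating the auxiliary ``$\wedge$-introduction for $\Gamma^S_n$'' fact, and the formalized necessitation already used in item 2, is what makes the argument go through cleanly; the remainder is routine formalization of elementary proof theory inside $\sys{PRA}$.
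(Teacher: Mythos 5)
Your proof is correct and takes essentially the same route as the paper, whose entire proof is the single phrase \emph{induction on $n$}: you supply the standard realization of exactly that, handling items 1--2 directly and item 3 by external induction on $n$ via the formalized derivability conditions. The only mislabel: the uniform fact $\sys{PRA} \vdash \forall\alpha\,\Box_S[\forall y\,\alpha \to \alpha]$ is not an instance of provable $\Sigma_1$-completeness but rather the $\sys{PRA}$-verified correctness of a primitive recursive proof-constructing function --- the same routine formalization you invoke elsewhere, so nothing in the argument is affected.
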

\begin{proof}
Индукция по $n$.
\end{proof}

Добавим в язык $\lng$ новый одноместный предикатный символ $\T$ и получим язык $\lng(\T)$. В этом языки определим следующие \emph{условия Тарского} $\mathrm{Tarski}$ для предиката $\T$.
\begin{enumerate}
\item $\forall x\, (\T(x) \to \St(x))$;
\item $\forall \alpha \in \Delta_0\, (Tr_0[\alpha] \leftrightarrow \T[\alpha])$;
\item $\forall \alpha\, (\T[\neg\alpha] \leftrightarrow \neg\T[\alpha])$;
\item $\forall \alpha, \beta\, (\T[\alpha \wedge \beta] \leftrightarrow \T[\alpha] \wedge \T[\beta])$;
\item $\forall \alpha\, (\T[\forall v_i\, \alpha] \leftrightarrow \forall v_i\, \T[\alpha])$ для всех $i \in \omega$,
\end{enumerate}

Теория $U$ является расширением теории $\sys{PRA} + \mathrm{Tarski}$ в языке $\lng(\T)$ произвольным множеством предложений языка $\lng$. \emph{Глобальным принципом рефлексии} для теории $S$ назовем $\lng(\T)$-предложение 
$$R_{\omega+1}(S) \eqcirc \forall\alpha (\Box_S[\alpha] \to \T[\alpha]).$$

\begin{theorem}[Kotlarski]\label{ktl}
Пусть $U$ есть расширение \sys{FT} арифметическими аксиомами. Тогда $U + R_{\omega+1}(S) \equiv_\omega U + \{\neg\Box^S_n[\bot] \mid n < \omega\}$, причем это утверждение доказуемо в $\sys{WKL}_0$.
\end{theorem}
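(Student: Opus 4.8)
\emph{Setup of the plan.} I read $A \equiv_\omega B$ as the assertion that the theories $A$ and $B$ prove exactly the same $\lng$-sentences, and I would obtain it as two inclusions, writing $\neg\Gamma^S_n[\bot]$ for the consistency statement denoted $\neg\Box^S_n[\bot]$ in the statement. Throughout, the ambient theory is $\sys{WKL}_0$, which I will use at the end through the arithmetized completeness theorem and weak K\"onig's lemma.

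\emph{The inclusion $\supseteq$ (easy).} First I would prove, by external induction on $n$, the schema
$$U + R_{\omega+1}(S) \vdash \forall\alpha\,(\Gamma^S_n[\alpha] \to \T[\alpha]).$$
The base case is $R_{\omega+1}(S)$ itself, as $\Gamma^S_0$ is $\Box_S$. For the step I unfold $\Gamma^S_{n+1}[\alpha]$ to a witness $\psi$ with $\forall y\,\Gamma^S_n[\psi]$ and $\Gamma^S_0[\forall y\,\psi \to \alpha]$; the induction hypothesis gives $\forall y\,\T[\psi]$, Tarski condition (5) (for the variable $y$) turns this into $\T[\forall y\,\psi]$, the base case applied to $\forall y\,\psi \to \alpha$ gives $\T[\forall y\,\psi \to \alpha]$, and conditions (3)--(4) (i.e.\ modus ponens for $\T$) yield $\T[\alpha]$. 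Since condition (2) refutes $\T[\bot]$, taking $\alpha := \bot$ gives $U + R_{\omega+1}(S) \vdash \neg\Gamma^S_n[\bot]$ for each $n$. Thus the left theory proves every axiom of the right one, so every $\lng$-consequence of the right theory is a consequence of the left; this half is elementary.

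\emph{The inclusion $\subseteq$ (hard).} Fix an $\lng$-sentence $\phi$ with $U + R_{\omega+1}(S) \vdash \phi$; I must derive $U + \{\neg\Gamma^S_n[\bot]\} \vdash \phi$. It is enough to refute the consistency of $U + \{\neg\Gamma^S_n[\bot]\} + \neg\phi$, so suppose it were consistent. By the completeness theorem, available in $\sys{WKL}_0$, it has a model $M$. I would then equip the $\lng$-reduct of $M$ with a new predicate $\T^*$ satisfying the Tarski conditions together with $R_{\omega+1}(S)$; since $(M,\T^*)$ has the same $\lng$-reduct as $M$, it still satisfies $\neg\phi$, so $(M,\T^*) \models U + R_{\omega+1}(S) + \neg\phi$, contradicting $U + R_{\omega+1}(S)\vdash\phi$. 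The predicate $\T^*$ I build as a full satisfaction class by approximation: fix $M$'s enumeration of sentences and consider the binary tree whose nodes are consistent partial truth assignments to initial segments respecting the compositional clauses (2)--(5) among the decided sentences and declaring true every $\alpha$ with $\Gamma^S_n[\alpha]$ for the $n$ reachable at that level. The tree is finitely branching, so by weak K\"onig's lemma it has a branch once it is infinite, and such a branch is exactly the desired $\T^*$.

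\emph{The main obstacle.} The crux, and the place where the scheme $\{\neg\Gamma^S_n[\bot]\}$ enters, is showing that this tree is infinite, i.e.\ that the reflection demands up to each finite level are jointly consistent with the compositional clauses. The point is that a finite inconsistency among them could be unwound, running the clauses for $\neg$, $\wedge$ and $\forall$ backwards along the $\omega$-rule steps, into a genuine $S$-derivation of $\bot$ with some finite number of applications of the $\omega$-rule, that is, into a witness for $\Gamma^S_n[\bot]$; here Lemma~\ref{gamma}(3) supplies the closure of each level under modus ponens and Lemma~\ref{gamma}(2) the passage from level $n$ to $n+1$ matching the clause for $\forall$. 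Making this syntax-to-semantics translation precise at the level of bounded-complexity derivations, and checking that the whole construction --- the tree, its infiniteness, and the branch extraction --- stays within $\sys{WKL}_0$, is the heart of the argument and the principal difficulty. The other inclusion and all the bookkeeping are routine by comparison.
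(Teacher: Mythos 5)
Your first inclusion is exactly the paper's own argument (induction on $n$ giving $U+R_{\omega+1}(S) \vdash \forall\alpha\,(\Gamma^S_n[\alpha]\to\T[\alpha])$, then $\alpha:=\bot$), and the outer shape of your second inclusion --- expand a model of $Q+\neg\phi$ by a truth predicate obtained, via weak K\"onig's lemma, as a path through a tree of finite partial truth assignments --- is also the paper's. The gap lies in between: you run the construction on an \emph{arbitrary} model $M$ of $Q+\neg\phi$ supplied by the completeness theorem, and there the construction fails. Tarski's clause (5) for the universal quantifier is not a local condition on finite initial segments: the offending configuration ``$\forall v\,\alpha$ is declared false while $\alpha(c_a)$ is declared true for every $a\in M$'' involves infinitely many sentences and is never witnessed at any finite node, so a path through your tree need not satisfy clause (5) at all; your sentence ``such a branch is exactly the desired $\T^*$'' is where the proof breaks. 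Nor can this be repaired by re-engineering the tree: already for $U \supseteq \sys{PA}$, by Lachlan's theorem a nonstandard model carrying a full compositional truth predicate must be recursively saturated, so for a non-recursively-saturated $M \models Q+\neg\phi$ the predicate $\T^*$ you want simply does not exist, and no argument in $\sys{WKL}_0$ or anywhere else can produce it.

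The two ideas you are missing are precisely the paper's additions. First, the model is not arbitrary: by Corollary~\ref{rec_sat_eq} (resting on Simpson's Lemma~\ref{rec_sat}) one replaces the given model by a countable \emph{recursively saturated} model $\mathcal M$ elementarily equivalent to it. Second, clause (5) is made finitary by Henkin-style axioms: the formulas $F_n$, namely $\exists x_n\,\psi_n(x_n) \to \bigvee_{0\leqslant j\leqslant a_n}\psi_n(x_n/c_{nj})$, whose finitely many witness constants $c_{nj}$ are computed using the saturation function $\{f\}$; the tree $B_{\mathcal M}$ then enforces only finitary conditions (containing $A_{\mathcal M}$, consistency under negation, deductive closure), and clause (5) for the resulting path follows from the presence of the $F_n$ in it. The price is that infiniteness of the tree reduces to the consistency of $A_{\mathcal M}$ --- the reflection sentences together with all the $F_n$ --- and that consistency proof (the unnamed Statement, by induction along the lexicographic order on sequences of indices) is the real heart of the theorem. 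Your proposed ``unwinding'' of a finite inconsistency into a witness of $\Gamma^S_n[\bot]$ gestures at this, but it cannot be carried out in your setting: to simulate one application of the $\omega$-rule via Lemma~\ref{gamma} you need a \emph{single standard} bound $m''$ such that $\mathcal M(\Box^S_{m''}[\theta \vee \neg\psi(c_z)])=1$ for \emph{all} $z\in M$ simultaneously, and the passage from ``for each $z$ there is some standard $n$'' to such a uniform standard bound is exactly what recursive saturation provides (Lemma~\ref{type}). Without it, your induction stops at the first quantifier step.
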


Пусть имеется примитивно-рекурсивная геделева нумерация языка $\lng(\T, C)$, где $C$ означает счетное множество новых констант: $C = \{c_n \mid n < \omega\}$. Ясно, что система $\sys{RCA}_0$ доказывает существование соответствующих примитивно-рекурсивных "<синтаксических"> предикатов, в частности $\Box_{\sys{PC}}$ для исчисления предикатов $\sys{PC}$.

Обозначим $P \rightleftharpoons U + R_{\omega+1}(S)$ и $Q \rightleftharpoons U + \{\neg\Box^S_n[\bot] \mid n < \omega\}$ и будем рассматривать $P$ и $Q$ как (счетные) множества аксиом.

\begin{lemma} В $\sys{WKL}_0$ доказуемо
	$\sys{FT} + R_{\omega+1}(S) \vdash \forall \phi\,(\Box^S_n[\phi] \to \T[\phi]))$ для всех $n < \omega$.
\end{lemma}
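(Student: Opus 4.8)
The plan is to proceed by external induction on $n$, constructing for each standard $n$ a concrete $\sys{FT} + R_{\omega+1}(S)$-derivation $d_n$ of the sentence $\forall\phi\,(\Box^S_n[\phi]\to\T[\phi])$. For a fixed $n$ the assertion that $\sys{FT}+R_{\omega+1}(S)$ proves $\forall\phi\,(\Box^S_n[\phi]\to\T[\phi])$ is $\Sigma^0_1$ (it says a derivation exists); once the derivation is exhibited at the meta-level, its existence is a true $\Sigma^0_1$ fact and is therefore provable already in $\sys{RCA}_0$, a fortiori in $\sys{WKL}_0$ (the real strength of $\sys{WKL}_0$ is consumed elsewhere in Kotlarski's theorem, not in this lemma). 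The base case $n=0$ needs no work: since $\Box^S_0[\phi]$ is by definition $\Box_S[\phi]$, the sentence $\forall\phi\,(\Box^S_0[\phi]\to\T[\phi])$ is literally the reflection axiom $R_{\omega+1}(S)$.

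For the inductive step I would reason inside $\sys{FT}+R_{\omega+1}(S)$, using the level-$n$ sentence $\forall\chi\,(\Box^S_n[\chi]\to\T[\chi])$ as an already-proved lemma. Fix $\phi$ and assume $\Box^S_{n+1}[\phi]$. Unwinding the recursive definition of $\Box^S_{n+1}$ (that is, of $\Gamma^S_{n+1}$) produces a formula $\psi$, with designated free variable $y$, such that (a)~$\forall y\,\Box^S_n[\psi]$ and (b)~$\Box_S[\forall y\,\psi\to\phi]$. From (a), instantiating the induction hypothesis at each numeral instance $\psi(\dot y)$ and generalising over $y$ gives $\forall y\,\T[\psi]$, whence the Tarski clause for the universal quantifier (condition~5, for the variable~$y$) yields $\T[\forall y\,\psi]$. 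Independently, applying $R_{\omega+1}(S)$ to (b) gives $\T[\forall y\,\psi\to\phi]$. Finally, the commutation of $\T$ with implication — which must first be derived from the clauses for $\neg$ and $\wedge$ (conditions~3 and~4) through $\alpha\to\beta \leftrightarrow \neg(\alpha\wedge\neg\beta)$ — turns this into $\T[\forall y\,\psi]\to\T[\phi]$; combined with $\T[\forall y\,\psi]$ it delivers $\T[\phi]$. Discharging the assumption and generalising over $\phi$ completes $d_{n+1}$.

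The conceptual reason the argument closes is that, although $R_{\omega+1}(S)$ reflects only ordinary provability $\Box_S=\Box^S_0$, the compositionality of $\T$ lets one climb the $\omega$-rule hierarchy one level at a time: the single hidden application of the $\omega$-rule in passing from $\Box^S_n$ to $\Box^S_{n+1}$ is matched, on the truth side, by the single use of Tarski condition~5, while the residual ordinary derivation is absorbed by one more appeal to global reflection.

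I expect the genuine difficulty to be syntactic bookkeeping rather than anything conceptual. The delicate point is the overloading of $y$: one must check that the object-level quantifier inside the box in (b) really binds the same variable that is being substituted in (a), so that the instance of condition~5 lines up; this is exactly where the ``for all $i\in\omega$'' form of the Tarski clauses is used, and where I would be most careful. I would also verify that the step from $\forall y\,\Box^S_n[\psi]$ to $\forall y\,\T[\psi]$ uses only instantiation and generalisation over the induction hypothesis (no induction on $y$, so it is pure first-order logic), and that the derived commutation of $\T$ with $\to$ is available uniformly in the coded formulas. Finally, since $d_n$ is built from $d_{n-1}$ by a fixed finite amount of extra inference, the map $n\mapsto d_n$ is primitive recursive; this means that if one instead reads \emph{для всех $n<\omega$} with $n$ ranging internally, the same construction can be carried through by $\Sigma^0_1$-induction inside $\sys{WKL}_0$.
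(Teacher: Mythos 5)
Your proposal is correct and takes essentially the same route as the paper: external induction on $n$, with the base case being the axiom $R_{\omega+1}(S)$ itself, and the inductive step carried out inside the theory by unwinding the definition of $\Box^S_{n+1}[\phi]$, applying the induction hypothesis to get $\forall y\,\T[\psi]$, and then using Tarski condition 5 together with the (derived) commutation of $\T$ with implication to conclude $\T[\phi]$. The only inessential differences are that the paper carries a cumulative hypothesis for all $k\leqslant n$ (its $k=0$ case is exactly your direct appeal to $R_{\omega+1}(S)$) and additionally inserts a side argument, not needed for the lemma itself, deriving $\neg\Box^S_n[\bot]$ inside $U+R_{\omega+1}(S)$ for use in the subsequent claim that this theory contains $Q$.
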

\begin{proof}
Если $P \vdash \bot$, то утверждение тривиально. Пусть не так. Индукцией по $n$ покажем, что
$$U + R_{\omega+1}(S) \vdash \forall \phi\,(\Box^S_n[\phi] \to \T\phi))\mbox{ для всех $n < \omega$.}$$
При $n=0$ утверждение очевидно. Предположим $\forall \phi\,(\Box^S_k[\phi] \to \T[\phi]))$ при всех $k \leqslant n$. Пусть $\Box^S_{n+1}[\phi]$. Тогда по определению получаем
$$\exists \psi\, (\forall u\, \Box^S_n[\psi(u)] \wedge \Box^S_0[\forall x\, \psi(x) \to \phi]).$$
Применяя предположение индукции для $k = 0$ и $n$, выводим
$$\exists \psi\, (\forall u\, \T [\psi(u)] \wedge \T [\forall x\, \psi(x) \to \phi]).$$
Из условий Тарского следует $\T [\phi]$.

Предположим $\Box^S_n[\bot]$ для некоторого $n$. По доказанному, в $P$ отсюда выводится $\T[\bot]$ и, затем, $Tr_0[\bot]$. Значит, $P$ противоречива, что неверно.

Это рассуждени легко формализуемо в $\sys{RCA}_0$.
\end{proof}

Отсюда следует, что $\sys{WKL}_0 \vdash P \supseteq Q$. Предположим, что теория $Q$ несовместна. Тогда несовместна и $P$, что доказывает теорему. Будем считать, что $Q$ совместна.

Для произвольного непустого $X \subseteq \omega$ обозначим $C\upharpoonright X = \{c_n \mid n \in X\}$. Мы будем рассматривать некоторый первопорядковый язык $\lng$ (в частности, $\lng(T)$) и его естественные расширения константами из $C$. Если были добавлены константы со множеством номеров $X$, то обозначим такое расширение $\lng(C\upharpoonright X)$.

\emph{Счетной моделью} языка $\lng$ называется пара $(M, \mathcal M)$, где \emph{носитель модели} $M = |\mathcal M|$ таков, что $\varnothing \neq M \subseteq \omega$, и отображение $\mathcal M\colon St_{\lng(C \upharpoonright M)} \cup Tm_{\lng(C \upharpoonright M)} \to M \cup \{0, 1\}$ удовлетворяет следующим естественным свойствам:
\begin{itemize}
\item если $\sigma \in St_{\lng(C \upharpoonright M)}$, то $\mathcal M(\sigma) \in \{0, 1\}$, и если $t \in Tm_{\lng(C \upharpoonright M)}$, то $\mathcal M(\sigma) \in M$, причем $\mathcal M (c_u) = u$ для всех $u \in M$;
\item если $t_1, t'_1 \ldots t_n, t'_n \in Tm_{\lng(C \upharpoonright M)}$ и $\mathcal M (t_i) = \mathcal M (t'_i)$ для всех $i$, то\linebreak
$\mathcal M (\alpha(t_1,\ldots,t_n)) = \mathcal M (\alpha(t'_1,\ldots,t'_n))$, для всякого предикатного или функционального знака $\alpha$ языка $\lng$;
\item $\mathcal M (\neg\sigma) = 1 - \mathcal M (\sigma)$, $\mathcal M (\sigma \wedge \tau) = \min(\mathcal M (\sigma), \mathcal M (\tau))$ и $\mathcal M (\exists x\, \sigma(x)) = \max_{a \in M} \mathcal M (\sigma(c_a))$.
\end{itemize}
Если для $\sigma \in St_\lng$ оказывается $\mathcal M (\sigma) = 1$, то пишем $\mathcal M \models \sigma$.

\emph{Типом с $k$-параметрами} в счетной модели $\mathcal M$ языка $\lng$ называется последовательность $(\phi_n(x, b_1,\ldots, b_k))_{n \in \omega}$ формул языка $\lng(C \upharpoonright M)$, не содержащих иных свободных переменных, кроме $x$ и таких, что параметры $b_i \in C \upharpoonright M$ и $\phi(x, y_1, \ldots, y_k) \in \lng$. Тип \emph{рекурсивен}, если существует рекурсивная функция $\{g\}\colon (\omega)^{k+1} \to \omega$ такая, что $\{g\}(n, u_1,\ldots, u_k) = \gn{\phi_n(x, c_{u_1},\ldots, c_{u_k})}$ для любых $n, u_1,\ldots, u_k \in \omega$. Скажем, что $\{g\}$ нумерует тип с параметрами $(\phi_n(x, b_1,\ldots, b_k))_{n \in \omega}$.

Модель $\mathcal M$ \emph{рекурсивно насыщена}, если найдется (тотальная) рекурсивная функция $\{f\}\colon (\omega)^{k+1} \to \omega$, такая что для всякого рекурсивного типа $(\phi_n(x, b_1,\ldots, b_k))_{n \in \omega}$, где $b_i = c_{u_i}$, из
$$\forall m\, \exists a \in M\, \forall n \leqslant m\, \mathcal M(\phi_n(a, b_1,\ldots, b_k)) = 1$$
следует $\{f\}(g, u_1,\ldots, u_k) \in M$ и
$$\forall n\, \mathcal M(\phi_n(c_{\{f\}(g, u_1,\ldots, u_k)}, b_1,\ldots, b_k)) = 1.$$

\begin{lemma}[Simpson\footnote{Stephen G. Simpson. \textit{Subsystems of Second Order Arithmetic.} 2009}, p. 380]\label{rec_sat}
В $\sys{WKL}_0$ доказуемо существование счетной рекурсивно насыщенной модели произвольного счетного совместного множества $X$ формул языка $\lng$.
\end{lemma}

\begin{cor}\label{rec_sat_eq}
В $\sys{WKL}_0$ доказуемо, что для всякой счетной модели $\mathcal M$ языка $\lng$ найдется счетная рекурсивно насыщенная модель $\mathcal N$, элементарно эквивалентная $\mathcal M$.
\end{cor}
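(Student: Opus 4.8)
The plan is to take the complete theory of $\mathcal M$ and feed it to Lemma~\ref{rec_sat}. Concretely, I would set
$$T = \{\sigma \in St_\lng \mid \mathcal M(\sigma) = 1\},$$
the set of all $\lng$-sentences true in $\mathcal M$, and then obtain $\mathcal N$ as a countable recursively saturated model of $T$. Since $T$ is a complete and consistent theory and $\mathcal N \models T$, the model $\mathcal N$ will be elementarily equivalent to $\mathcal M$, while recursive saturation is guaranteed by the lemma itself. Thus the corollary is essentially a packaging of Lemma~\ref{rec_sat}.

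Carrying this out inside $\sys{WKL}_0$, the first step is to check that $T$ exists as a set. As $\mathcal M$ is given as a total single-valued function, the condition $\mathcal M(\sigma) = 1$ is $\Delta^0_1$ with $\mathcal M$ as a parameter, so $T$ exists already by $\Delta^0_1$-comprehension in $\sys{RCA}_0$; being a set of codes of sentences it is countable, as required. Next I would record two properties of $T$: it is \emph{consistent}, because $\mathcal M$ is a model of it (this is the semantic notion of consistency witnessed directly by $\mathcal M$, which is what Lemma~\ref{rec_sat} consumes); and it is \emph{complete}, because the clause $\mathcal M(\neg\sigma) = 1 - \mathcal M(\sigma)$ forces exactly one of $\sigma$, $\neg\sigma$ into $T$ for every $\lng$-sentence $\sigma$.

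With $T$ in hand, Lemma~\ref{rec_sat} yields a countable recursively saturated $\mathcal N$ with $\mathcal N \models T$. To finish, I would verify elementary equivalence directly: for an arbitrary $\sigma \in St_\lng$, if $\mathcal M \models \sigma$ then $\sigma \in T$ and hence $\mathcal N \models \sigma$; and if $\mathcal M \not\models \sigma$, then $\mathcal M \models \neg\sigma$, so $\neg\sigma \in T$, whence $\mathcal N \models \neg\sigma$ and $\mathcal N \not\models \sigma$. Therefore $\mathcal M \equiv \mathcal N$.

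I do not expect a serious obstacle, since the mathematical content is entirely carried by Lemma~\ref{rec_sat}. The only point demanding care is the reverse-mathematical bookkeeping: confirming that forming $T$ needs no more than $\Delta^0_1$-comprehension, and that the consistency hypothesis of Lemma~\ref{rec_sat} is met by the given model $\mathcal M$ rather than by an appeal to a stronger completeness principle. Should Lemma~\ref{rec_sat} instead demand syntactic consistency of $T$, the one extra ingredient is the $\sys{WKL}_0$-provable soundness of the predicate calculus, which converts ``$\mathcal M \models T$'' into ``$T \nvdash \bot$''.
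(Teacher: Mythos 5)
Your proposal is correct and follows essentially the same route as the paper: the paper likewise forms the (consistent, by soundness) theory of $\mathcal M$, applies Lemma~\ref{rec_sat} to it, and establishes elementary equivalence by exactly your complete-theory argument ($\mathcal N \models \sigma$ and $\mathcal M \not\models \sigma$ would give $\mathcal M \models \neg\sigma$, hence $\mathcal N \models \neg\sigma$, a contradiction). The only, immaterial, difference is that the paper takes the theory of $\mathcal M$ in the expanded language $\lng(C \upharpoonright M)$ (its elementary diagram) and then restricts the resulting recursively saturated model back to $\lng$, whereas you feed the pure $\lng$-theory to the lemma, which slightly streamlines the bookkeeping.
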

\begin{proof}
Рассуждаем в $\sys{RCA}_0$. Существует совместное множество $$T_{\mathcal M } = \{\sigma \mid \mathcal St_{\lng(\T, C \upharpoonright M)}(\sigma) \wedge \mathcal M (\sigma) = 1\},$$
т.\,е. теория модели $\mathcal M$ в языке $\lng(C \upharpoonright M)$. Согласно лемме~\ref{rec_sat}, существует счетная рекурсивно насыщенная модель $\mathcal N' = (\mathcal N', N)$ множества $T_{\mathcal M '}$ в языке $\lng(C \upharpoonright M)$. Можно рассмотреть ее ограничение $\mathcal N ' = (\mathcal N, N)$ на язык $\lng$, положив $\mathcal N = \mathcal N \mid_{St_{\lng(C \upharpoonright N)} \cup Tm_{\lng(C \upharpoonright N)}}$. Модель $\mathcal N$ является искомой. Рекурсивная насыщенность очевидна. Теперь допустим, что $\sigma \in St_{\lng}$ и $\mathcal N \models \sigma$. Имеем $\mathcal N ' \models \phi$. Предположим, что $\mathcal M \not\models \phi$. Тогда $\mathcal M \models \neg\phi$ и, по определению, $\mathcal N' \models \neg\phi$. Противоречие. Отсюда видно, что модель $\mathcal M$ элементарно эквивалентна $\mathcal N$. 
\end{proof}

Пусть дана некоторая формула с одной свободной переменной $\theta(x)$ в смысле модели $\mathcal{M}$, т.\,е. некоторый терм языка $\lng(C \upharpoonright M)$, имеющий определенный код. Заметим, что тип $(\neg\Box^S_n[\theta(x)])_{n \in \omega}$ является рекурсивным (с параметром $\theta$ из $M$): действительно, $\gn{\neg\Box^S_n[\theta]} = sub(\gn{\neg\Box^S_n[y]}, \theta)$. Следовательно, существует рекурсивная нумерация $\{h\}$ этого типа: $\{h\}(n, \theta) = \gn{\neg\Box^S_n[\theta]}$.
\begin{lemma}\label{type}
Пусть модель $\mathcal M$ языка $\lng$ рекурсивно насыщена, $\mathcal M(St_{\lng{}}(\exists x\, \psi(x))) = 1$ и $\mathcal M(\Box^S_m[\theta \vee \neg\psi(c_{\{f\}(h,\, \theta \vee \neg\psi(x))})]) = 1$  для некоторого $m \in \omega$. Тогда найдется $m' \in \omega$, такое что
$$\mathcal M(\Box^S_{m'}[\theta \vee \neg\exists x\, \psi(x)]) = 1$$
\end{lemma}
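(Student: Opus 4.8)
Замысел --- воспользоваться рекурсивной насыщенностью, рассуждая от противного. По условию выделенный свидетель $a = \{f\}(h, \theta \vee \neg\psi(x))$, доставляемый функцией насыщения, уже удовлетворяет соотношению $\mathcal M(\Box^S_m[\theta\vee\neg\psi(c_a)])=1$ при некотором стандартном $m$; иными словами, $a$ не реализует рекурсивный тип $(\neg\Box^S_n[(\theta\vee\neg\psi)(x)])_{n\in\omega}$ на уровне $m$. Сначала я показал бы, что отсюда вытекает конечная невыполнимость этого типа в $\mathcal M$: будь он конечно выполним, рекурсивная насыщенность поместила бы $a$ в $M$ и сделала бы его реализацией всего типа, откуда, в частности, $\mathcal M(\neg\Box^S_m[\theta\vee\neg\psi(c_a)])=1$ --- вопреки условию.

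Из отсутствия конечной выполнимости я извлёк бы единую стандартную границу $m''$ такую, что для всякого $b \in M$ найдётся $n \leqslant m''$ с $\mathcal M(\Box^S_n[\theta\vee\neg\psi(c_b)])=1$. Применяя монотонность (пункт~2 леммы~\ref{gamma}, применённый нужное конечное число раз и справедливый в $\mathcal M$, поскольку $\mathcal M$ --- модель $\sys{PRA}$), я привёл бы это к виду $\mathcal M(\Box^S_{m''}[\theta\vee\neg\psi(c_b)])=1$ сразу для всех $b \in M$, то есть $\mathcal M(\forall y\, \Box^S_{m''}[\theta\vee\neg\psi(y)])=1$. Это и есть посылка, требуемая для однократного применения $\omega$-правила.

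Заключительный шаг --- подставить это в определение $\Box^S_{m''+1}$, взяв в качестве свидетеля внешнего квантора существования формулу $\theta\vee\neg\psi$ (её код лежит в $M$ ввиду замкнутости $M$ относительно синтаксических операций). Остаётся установить второй конъюнкт $\mathcal M(\Box^S_0[\forall y(\theta\vee\neg\psi(y)) \to (\theta\vee\neg\exists x\,\psi(x))])=1$; он выполнен, ибо эта импликация логически общезначима (что доказуемо в $\sys{PRA}$ равномерно по кодам $\theta, \psi$), а значит, $S$ её заведомо выводит. Соединяя оба конъюнкта, получаем $\mathcal M(\Box^S_{m''+1}[\theta\vee\neg\exists x\,\psi(x)])=1$, так что подходит $m' = m''+1$.

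Основным препятствием, как я ожидаю, будет аккуратное проведение рассуждения в тех двух местах, где внешний факт приходится отражать внутри $\mathcal M$ для, возможно, нестандартных кодов $\theta, \psi$: во-первых, при считывании конечной невыполнимости по свидетелю насыщения (здесь и используется условие $\mathcal M(St_{\lng}(\exists x\,\psi(x)))=1$, гарантирующее, что $\theta\vee\neg\psi(x)$ --- настоящая формула с единственной свободной переменной $x$, так что сам тип и его нумерация $\{h\}$ корректны); во-вторых, при проверке того, что чисто логическая импликация $\forall y(\theta\vee\neg\psi(y))\to (\theta\vee\neg\exists x\,\psi(x))$ распознаётся в $\mathcal M$ как выводимая в $S$. Оба случая сводятся к подстановке $\sys{PRA}$-доказуемых универсальных утверждений о предикате доказуемости в данные коды, и всё рассуждение без труда формализуется в $\sys{WKL}_0$ (и даже в $\sys{RCA}_0$).
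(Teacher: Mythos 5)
Ваше доказательство корректно и по существу совпадает с доказательством из статьи: та же редукция через рекурсивную насыщенность к конечной невыполнимости типа $(\neg\Box^S_n[\theta \vee \neg\psi(x)])_{n \in \omega}$, извлечение стандартной границы $m''$, выравнивание уровня по лемме~\ref{gamma} и один шаг $\omega$-правила с логически общезначимой импликацией. Отличие лишь косметическое: статья сначала выводит $\Box^S_{m''+1}[\theta \vee \forall x\, \neg\psi(x)]$ и затем переходит к $\theta \vee \neg\exists x\, \psi(x)$, а вы объединяете это в одно применение определения $\Gamma^S_{m''+1}$; кроме того, вы явно проговариваете неявные в статье предпосылки ($\mathcal M \models \sys{PRA}$ и работу с нестандартными кодами).
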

\begin{proof}
Из условия и рекурсивной насыщенности $\mathcal M$ следует, что тип $(\neg\Box^S_n[\theta \vee \neg\psi(x)])_{n \in \omega}$ не является локально выполнимым. Тогда при некотором $m'' \in \omega$ для каждого $z \in M$ существует $n < m''$ со свойством
$$\mathcal M(\Box^S_n[\theta \vee \neg\psi(c_z)]) = 1.$$
С помощью леммы~\ref{gamma} получаем
$$\begin{array}{l}
\mathcal M(\Box^S_{m''}[\theta \vee \neg\psi(c_z)]) = 1 \text{ для всех $z \in M$};\\
\mathcal M(\Box^S_{m''+1}[\theta \vee \forall x\, \neg\psi(x)]) = 1;\\
\mathcal M(\Box^S_{m''+1}[\theta \vee \neg\exists x\, \psi(x)]) = 1.\\
\end{array}$$
\end{proof}

Пусть имеется некоторая модель $\mathcal M'$ теории $Q$. Согласно следствию~\ref{rec_sat_eq}, $\sys{WKL}_0$ доказывает существование счетной рекурсивно насыщенной модели $\mathcal M$, элементарно эквивалентной $\mathcal M'$.

Рассмотрим $\mathcal M$-рекурсивную нумерацию всех формул $\lng$ с одной свободной переменной в смысле модели $\mathcal M$: $(\psi_n(x_n))_{n \in \omega}$, так что для всех $n$ имеем
$$\mathcal M(St_{\lng}(\exists x_n\,\psi_n(x_n))) = 1.$$
Введем рекуррентную последовательность натуральных чисел $(a_i)_{i \in \omega}$, положив $a_0 = 1$ и $a_{i+1} = a_i(a_i + 1)$. Определим следующую $\mathcal M$-рекурсивную последовательность констант $c_{ij} \in C \upharpoonright M$, где для всякого $i$ выполнено $0\leqslant j \leqslant a_i$. Будем для простоты обозначать $C(u) = c_{\{f\}(h, u)}$ и положим
$$c_{00} = C(\neg\psi_0).$$
Ясно, что имеется ровно $a_{i + 1}$ наборов $(v_0,\ldots, v_{i})$, где каждое $v_k \in \{0, a_k\}$. При данном $i + 1$ можно отождествить каждый такой набор с его порядковым номером в лексикографическом упорядочении по возрастанию. Так, для всякого $j \in \{0, a_{i + 1}\}$ можно вычислить компоненты $j$-ого по порядку набора $(j_0,\ldots, j_{i})$. Предположим, что определены $c_{kj}$ для всех $k \leqslant i$ и соответствующих $j$. Для всех $k \leqslant i$ и $j \in [0, a_{k+1}]$ обозначим $(\psi_k)^j = \top$, если $j_k = 0$, и $ (\psi_k)^j = \psi_k(x / c_{k(j_k - 1)})$ в противном случае. Наконец, определим
$$c_{i+1\,j} = C(\bigvee_{k \leqslant i}\neg(\psi_k)^{j_k} \vee \neg\psi_{i+1}),$$
для всех $j \in [0, a_{k+1}]$.

Теперь мы готовы определить $\mathcal M$-рекурсивную последовательность формул $(F_n)_{n\in \omega}$:
$$\exists x_n \psi_n(x_n) \to \bigvee_{0 \leqslant j \leqslant a_n}\psi_n(x_n / c_{nj}).$$

\sys{RCA_0} доказывает существование $\mathcal M$-рекурсивного множества
$$A_{\mathcal M} = \{\phi \mid  \mathcal M(\Box^S_0[\phi]) = 1 \vee \exists n < \phi\, (\phi = F_n)\}.$$

Докажем, что $A_{\mathcal M}$ совместно в смысле \sys{PC}. Допустим противное. Тогда имеем
$$D, F_{n_1},\ldots, F_{n_w} \vdash \bot,$$
для некоторого конечного множества $D$, такого что для всякой $\phi \in D$ имеем $\mathcal M(\Box^S_0[\phi]) = 1$. Станем без ограничения общности считать, что $n_1 < \ldots < n_w$, и переименуем все переменные под квантором в формулах $F_{n_i}$ в какую-то одну. Будем также писать $\models \phi$, если  $M(\Box^S_m[\phi]) = 1$ для некоторого $m$.

Тогда согласно лемме~\ref{gamma} имеем $\models \wedge D$ и $\models\wedge D \to  \neg F_{n_1} \vee \ldots \vee \neg F_{n_w}$ и, окончательно, $\models \neg F_{n_1} \vee \ldots \vee \neg F_{n_w}$, т.\,е.
\begin{equation*}
\models (\exists x\,\psi_{n_1}(x) \wedge \bigwedge_{0 \leqslant j \leqslant a_{n_1}}\neg\psi_{n_1}(c_{n_1j}))\vee \ldots \vee(\exists x\,\psi_{n_w}(x) \wedge \bigwedge_{0 \leqslant j \leqslant a_{n_w}}\neg\psi_{n_w}(c_{n_w j})).
\end{equation*}
Переименуем для удобства наши формулы, положив $\phi_i = \psi_{n_i}$ для всех $i \in [1, w]$. Тогда
\begin{equation}\label{eq3}
\models (\exists x\,\phi_1(x) \wedge \bigwedge_{0 \leqslant j \leqslant a_{n_1}}\neg\phi_1(c_{n_1j}))\vee \ldots \vee(\exists x\,\phi_w(x) \wedge \bigwedge_{0 \leqslant j \leqslant a_{n_w}}\neg\phi_w(c_{n_w j})).
\end{equation}

Будем рассматривать последовательности чисел из $\{ 1,\ldots,w \}$. Для непустой подпоследовательности $(m_1,\ldots,m_v)$ назовем последовательность формул
$$\neg\phi_{m_1}(c_{m_1 j_1}),\ldots, \neg\phi_{m_v}(c_{m_v j_v})$$
\emph{правильной}, если константы согласованы в следующем смысле:
$$c_{m_{s+1} j_{s+1}} = C(\neg\phi_{m_1}(c_{m_1 j_1}) \vee \ldots \vee \neg\phi_{m_s}(c_{m_s j_s}) \vee \neg\phi_{m_{s+1}}).$$
Поскольку для каждой последовательности чисел $(m_1,\ldots,m_v)$ правильная последовательность однозначно определена, мы будем обозначим ее так: $$\neg\phi_{m_1}^*,\ldots. \neg\phi_{m_v}^*.$$
Если последовательность чисел пуста, то поставим ей в соответствие пустую правильную последовательность.

На множестве возрастающих последовательностей чисел из $\{1,\ldots,w\}$ рассмотрим лексикографический порядок $\prec$, полагая $(m_1,\ldots, m_v) \prec\linebreak (l_1, \ldots, l_u)$, если существует $r \in \omega$, такое что (1) $m_i = l_i$ для всех $i < r$, и (2) $m_r < l_r$ или $r = u < v$. Заметим, что если $(m_1,\ldots, m_v) \prec (l_1, \ldots, l_u)$, то для соответствующего $r$ будем иметь
$$(\neg\phi_{m_1}^*,\ldots, \neg\phi_{m_{r-1}}^*) = (\neg\phi_{l_1}^*,\ldots, \neg\phi_{l_{r-1}}^*).$$
Ясно также, что $\prec$ фундировано и, в частности, последовательность $(1,\ldots,w)$ будет наименьшей, а пустая "--- наибольшей.

\begin{stat}
Для всякой последовательности $(m_1,\ldots,m_v)$ выполнено $\models \neg\phi_{m_1}^* \vee \ldots \vee \neg\phi_{m_v}^*$.
\end{stat}
\begin{proof}
Рассуждаем по индукции. Для последовательности $(1,\ldots,w)$ получаем требуемое непосредственно из формулы~(\ref{eq3}). Допустим, что утверждение установлено для всех последовательностей $\prec$-меньших $(m_1,\ldots,m_v)$. Представим правильную последовательность $\neg\phi_{m_1}^*, \ldots.\neg\phi_{m_v}^*$ в виде
\begin{multline*}
\neg\phi_{l_1}^*, \neg\phi_{l_1 + 1}^*,\ldots, \neg\phi_{l_1 + b_1}^*, \neg\phi_{l_2}^*, \ldots, \neg\phi_{l_2 + b_2}^*, \ldots,\\ \neg\phi_{l_{u-1} + b_{u-1}}^*, \neg\phi_{l_u}^*, \ldots, \neg\phi_{l_u + b_u}^*.
\end{multline*}
Из~(\ref{eq3}) получаем
\begin{multline}\label{eq4}
\models \exists x\,\phi_{1}(x) \vee \ldots \vee\exists x\,\phi_{d_0}(x) \vee \neg\phi_{l_1}^* \vee \neg\phi_{l_1 + 1}^*\vee\ldots\vee \neg\phi_{l_1 + b_1}^* \vee\ldots\\
\exists x\,\phi_{l_1 + b_1+1}(x) \vee \ldots \vee\exists x\,\phi_{l_1 + b_1+d_1}(x) \vee \neg\phi_{l_2}^*\vee \ldots\vee \neg\phi_{l_2 + b_2}^*\vee \ldots\\
\neg\phi_{l_{u-1}}^* \vee\ldots\vee \neg\phi_{l_{u-1} + b_{u-1}}^* \vee \exists x\,\phi_{l_{u-1} + b_{u-1}+1}(x) \vee \ldots \vee \exists x\,\phi_{l_{u-1} + b_{u-1}+d_{u-1}}(x) \vee\ldots\\
\neg\phi_{l_u}^*\vee \ldots \vee \neg\phi_{l_u + b_u}^* \vee \exists x\,\phi_{l_{u} + b_{u}+1}(x) \vee \ldots \vee \exists x\,\phi_{l_{u} + b_{u}+d_{u}}(x),
\end{multline}
где $d_0 + 1 = l_1$, $l_i + b_i + d_i + 1 = l_{i+1}$ для всех $i \in [1, u)$, и $l_u + b_u + d_u = w$, причем может оказаться, что $d_0 = 0$ или $d_u = 0$, а также рассматривая правильная последовательность может быть пустой "--- в последнем случае в~(\ref{eq4}) входят только экзистенциальные формулы. Полученную дизъюнкцию можно естественным образом разбить на блоки, состоящие из одних лишь негативных или экзистенциальных формул. Эти блоки мы для краткости обозначим $N_i$ и $E_i$:
$$\models E_0 \vee N_{1} \vee E_{1} \vee \ldots \vee E_{{u-1}} \vee N_{u} \vee E_{u}.$$

Нашей целью будет устранить в формуле~(\ref{eq4}) все блоки экзистенциальных формул. Вновь рассуждаем индуктивно. Устраним блок $E_u$ (быть может, пустой). По предположению внешней индукции имеем
$$\models N_{1} \vee \ldots \vee N_{u-1} \vee \neg\phi_{l_u}^*\vee \ldots\vee \neg\phi_{l_u + b_u}^* \vee \neg\phi_{l_u + b_u + i}^*$$
для всех $i \in [1, d_u]$, ибо $l_u + b_u = m_v$ и $(m_1,\ldots m_v, m_v + i) \prec (m_1,\ldots m_v)$. Применяя лемму~\ref{type}, заключаем для каждого $i \in [1, d_u]$
$$\models \neg\phi_{l_1}^* \vee \ldots \vee \neg\phi_{l_1 + b_1}^* \vee \ldots\vee\neg\phi_{l_u}^*\vee \ldots\vee \neg\phi_{l_u + b_u}^* \vee \neg\exists x\,\phi_{l_u + b_u + i}(x),$$
что совместно с~(\ref{eq4}) дает требуемое
$$
\models E_0 \vee N_{1} \vee E_{1} \vee N_{2} \vee E_{2} \vee \ldots E_{{u-1}} \vee \neg\phi_{l_u}^*\vee \ldots \vee \neg\phi_{l_u + b_u}^*.
$$
Предположим теперь, что последние $\exists$-блоки, начиная с $(q+1)$-ого, устранены (или пусты), то есть выполнено
\begin{multline}\label{eq5}
\models E_0 \vee N_{1} \vee E_{1} \vee \ldots \vee N_{q-1} \vee E_{{q - 1}} \vee \neg\phi_{l_q}^* \vee \ldots \vee \neg\phi_{l_q + b_q}^* \vee \\
\exists x\,\phi_{l_{q} + b_{q}+1}(x) \vee \ldots \vee \exists x\,\phi_{l_{q} + b_{q}+d_{q}}(x) \vee\\
\neg\phi_{l_{q+1}}^* \vee \ldots \vee \neg\phi_{l_{q+1} + b_{q+1}}^* \vee N_{{q+2}} \vee \ldots \vee N_{u}.
\end{multline}
Если $l_q + b_q = m_r$ и $l_{q+1} = l_q + b_q + d_q + 1 = m_s$, то, очевидно, $(m_1, \ldots, m_r, m_r + i) \prec (m_1, \ldots, m_r, m_s, \ldots m_v)$ для всех $i \in [1, d_q]$. Значит, предположение внешней индукции дает
\begin{multline*}
\models N_{1} \vee \ldots \vee N_{{q - 1}} \vee \neg\phi_{l_q}^* \vee \ldots \vee \neg\phi_{l_q + b_q}^* \vee \neg\phi_{l_{q} + b_{q}+i}^*
\end{multline*}
для каждого $i \in [1, d_q]$. По лемме~\ref{type} получаем отсюда
\begin{multline*}
\models N_{1} \vee \ldots \vee N_{{q - 1}} \vee \neg\phi_{l_q}^* \vee \ldots \vee \neg\phi_{l_q + b_q}^* \vee \neg\exists\,\phi_{l_{q} + b_{q}+i}(x).
\end{multline*}
С помощью предположения внутренней индукции~(\ref{eq5}) заключаем требуемое
\begin{multline*}\label{eq5}
\models E_0 \vee N_{1} \vee E_{1} \vee \ldots \vee N_{q-1} \vee E_{{q - 1}} \vee \neg\phi_{l_q}^* \vee \ldots \vee \neg\phi_{l_q + b_q}^* \vee \\
N_{q+1} \vee N_{{q+2}} \vee \ldots \vee N_{u}.
\end{multline*}
Окончательно получаем $\models N_{1} \vee \ldots \vee N_{u}$, т.\,е. $\models \phi_{m_1} \vee \ldots \vee \phi_{m_v}$.
\end{proof}

Из доказанного утверждения вытекает, что $\models \bot$: достаточно рассмотреть пустую последовательность. Однако, это противоречит $\mathcal M \models Q$ и определению теории $Q$. Значит, множество $A_{\mathcal M}$ совместно.
%\end{proof}

% С помощью аксиомы $(WKL)$ получаем, что в дереве $A_{\mathcal M}$ есть путь $p$. В $\sys{RCA}_0$ определим множество $X_p = \{\sigma | (p)_\sigma = 1\}$. Очевидно, $\sys{RCA}_0$ доказывает, что $X_p$ совместно и
% \begin{equation}\label{eq1}
% \forall \phi, n\, (\mathcal M(\Box^S_n[\phi]) = 1 \to \phi \in X_p) \wedge \forall n\, (E_n \in X_p)
% \end{equation}

Определим подмножество универсального двоичного дерева $B_{\mathcal M} \subseteq 2^{< \omega}$:
\begin{multline*}
t \in B_p \leftrightarrow \forall \sigma < l(t)\,  ((t)_\sigma = 1 \to \mathcal M (St_\lng(\sigma)) = 1) \wedge\\
\forall \sigma < l(t)\, (\sigma \in A_{\mathcal M} \to (t)_\sigma = 1 ) \wedge \\
\forall \sigma, \tau < l(t)\, ((\mathcal M(St_{\EuScript{L}}(\sigma)) = 1 \wedge \tau = \neg \sigma) \to (t)_\sigma = 1 - (t)_\tau) \wedge\\
\forall d < l(t)\, [(\mbox{"<$d$ есть вывод в \sys{CPC}">} \wedge\\ \forall i < l(d)\, (\mbox{"<$(d)_i$ есть собственная аксиома">} \to (t)_{(d)_i} = 1)) \to\\ \forall i < l(d)\, (\mathcal M(St_\lng((d)_i)) = 1 \to (t)_{(d)_i} = 1)]
\end{multline*}
С помощью принципа $\Sigma^0_0$-свертки в $\sys{RCA}_0$ доказуемо существование такого множества.
\begin{lemma}
В $\sys{RCA}_0$ доказуемо, что $B_{\mathcal M}$ есть бесконечное дерево.
\end{lemma}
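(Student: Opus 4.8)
The plan is to prove two facts about $B_{\mathcal M}$ separately in $\sys{RCA}_0$: that it is closed under initial segments (so it is a tree) and that it contains a node of every length (so it is infinite). I would dispatch the tree property first, as it is essentially bookkeeping. Each of the four defining clauses of $t \in B_{\mathcal M}$ is a bounded universal statement whose quantifiers range over $\sigma, \tau, d < l(t)$ and whose only references to the string are to bits $(t)_\sigma$ with $\sigma < l(t)$. The single point requiring care is clause four, which mentions $(t)_{(d)_i}$: here I would invoke the standard property of the fixed primitive recursive sequence coding, provable in $\sys{RCA}_0$, that every component satisfies $(d)_i \le d$. Hence for a proper initial segment $s \preceq t$ with $l(s) < l(t)$, every index occurring in a clause instantiated at some $d < l(s)$ (resp. $\sigma, \tau < l(s)$) is itself $< l(s)$, so $s$ and $t$ agree on all relevant bits and each clause for $s$ is literally an instance of the corresponding clause for $t$. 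Thus $t \in B_{\mathcal M}$ implies $s \in B_{\mathcal M}$.

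For infinitude it suffices to produce, for each $n$, a node of length $n$, which I would build from a truth assignment to the $\lng$-sentences (in the sense of $\mathcal M$) with codes below $n$. Concretely, enumerate $\sigma_1 < \dots < \sigma_k$, the numbers $\sigma < n$ with $\mathcal M(\St(\sigma)) = 1$, and seek $\varepsilon \in 2^{k}$ such that $A_{\mathcal M}$ together with the literals $\sigma_i^{\varepsilon_i}$ (where $\sigma^1 = \sigma$ and $\sigma^0 = \neg\sigma$) is $\sys{PC}$-consistent. From any such $\varepsilon$ I define $t$ of length $n$ by $(t)_\sigma = 1$ iff $\sigma = \sigma_i$ with $\varepsilon_i = 1$; this is a $\Delta^0_1$ definition, so $t$ exists. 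I then check $t \in B_{\mathcal M}$: clause one holds since only the $\sigma_i$ carry bit $1$; clause two because each $\phi \in A_{\mathcal M}$ with $\phi < n$ is an $\mathcal M$-sentence and a consequence of $A_{\mathcal M}$, so assigning it $0$ would contradict consistency; clause three because the consistent literal set can contain neither $\sigma$ and $\neg\sigma$ nor $\neg\sigma$ and $\neg\neg\sigma$, forcing complementary bits on each negation pair; and clause four because, if $d < n$ is a proof all of whose proper axioms carry bit $1$, then every line $(d)_i$ that is an $\mathcal M$-sentence is derivable from the bit-$1$ set $\subseteq A_{\mathcal M}\cup\{\text{literals}\}$, so giving $(d)_i$ bit $0$ would again break consistency. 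Note that these verifications lean on the \emph{genuine} consistency of $A_{\mathcal M}$, established above, and so require no bound on proof length, even though clause four itself only inspects proofs $d < n$.

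The substantive point, which I expect to be the crux, is the finite Lindenbaum statement $P(k)$: for each $k$ there is $\varepsilon \in 2^{k}$ with $A_{\mathcal M}\cup\{\sigma_i^{\varepsilon_i}\}_{i\le k}$ consistent. I would prove $\forall k\, P(k)$ by induction on $k$. The base case is exactly $\con{A_{\mathcal M}}$. For the step, given a consistent extension by $\varepsilon \in 2^{k}$, either adding $\sigma_{k+1}$ keeps consistency, and I take $\varepsilon_{k+1} = 1$, or it produces a proof of $\bot$, whence the current set proves $\neg\sigma_{k+1}$ and adding $\neg\sigma_{k+1}$ is consistent, and I take $\varepsilon_{k+1} = 0$; this is the classical deduction-theorem case split. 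The delicate formal issue is that the induction formula $P(k)$ carries a bounded existential quantifier $\exists \varepsilon < 2^{k}$ in front of the $\Pi^0_1$ consistency predicate, so one must confirm that $P$ lies within the induction scheme available in $\sys{RCA}_0$. This follows from the closure of $\Pi^0_1$ under bounded quantification, i.e. from the $\Sigma^0_1$-bounding principle $B\Sigma^0_1$ that $\sys{RCA}_0$ proves, after which $\Sigma^0_1$-induction applies to $P$. Once $\forall k\, P(k)$ is secured, the construction of the previous paragraph yields a node of each length $n$, and together with the tree property this gives, in $\sys{RCA}_0$, that $B_{\mathcal M}$ is an infinite tree.
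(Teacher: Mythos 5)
Your proof is correct, but its second half follows a genuinely different route from the paper's. The tree-closure argument is the same in both proofs: the key point, that every index mentioned by a clause instantiated at $d < l(s)$ satisfies $(d)_i \le d < l(s)$, so each clause for an initial segment $s \sqsubset t$ is literally an instance of the corresponding clause for $t$. For infinitude, however, the paper runs no Lindenbaum induction. For each fixed $k$ it builds $t_k$ by a bounded procedure: set the bits of $A_{\mathcal M}$ below $k$, close under proofs of code $< k$, list the negation pairs $\phi_1,\dots,\phi_m$ still carrying two zeros, and consider all $2^m$ completions of these pairs, each followed by another $k$-closure. That at least one completion creates no contradictory pair is proved in one shot: if all of them failed, then $D,\phi_1^{s_1},\dots,\phi_m^{s_m}\vdash\bot$ for every $(s_1,\dots,s_m)\in\{0,1\}^m$, and since $\sys{PC}$ proves the tautology $\bigvee_{(s_1,\dots,s_m)}\phi_1^{s_1}\wedge\dots\wedge\phi_m^{s_m}\leftrightarrow(\phi_1\vee\neg\phi_1)\wedge\dots\wedge(\phi_m\vee\neg\phi_m)$, this gives $D\vdash\bot$, contradicting the genuine consistency of $A_{\mathcal M}$. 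So the unbounded $\Pi^0_1$ fact $\con{A_{\mathcal M}}$ is used exactly once, to refute a statement that is itself bounded, and no induction whose induction formula contains an unbounded consistency predicate ever appears; the $\sys{RCA}_0$-formalization is therefore immediate. You instead prove the finite Lindenbaum statement $P(k)\equiv\exists\varepsilon<2^k\,\con{A_{\mathcal M}\cup\{\sigma_i^{\varepsilon_i}\}_{i\le k}}$ by induction on $k$, which forces exactly the metamathematical care you describe: $P$ must first be brought to $\Pi^0_1$ form via $B\Sigma^0_1$ (which $\sys{RCA}_0$ does prove), and then one applies $\Pi^0_1$-induction --- note that it is $\Pi^0_1$-, not $\Sigma^0_1$-induction that you need at that point, the two schemes being equivalent over the base theory, so this is a slip of wording rather than a gap. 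What your route buys is modularity: all four clauses of membership in $B_{\mathcal M}$ follow directly from consistency of the completed set, with no closure bookkeeping. What the paper's route buys is the complete avoidance of induction over unbounded consistency statements, at the price of the $2^m$ case analysis and the explicit $k$-closure construction. Both arguments go through in $\sys{RCA}_0$.
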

\begin{proof}
Пусть $t \in B_{\mathcal M}$ и $s \sqsubset t$. Докажем, что $s \in B_{\mathcal M}$. Действительно, если $\sigma, \neg\sigma < l(s)$, то  имеем $\sigma, \neg\sigma < l(t)$ и $\min((s)_\sigma, (s)_{\neg\sigma}) = \min((t)_\sigma, (t)_{\neg\sigma}) = 0$. Проверим дедуктивную замкнутость. Пусть всем собственным аксиомам вывода $d < l(s)$ соответствуют единицы в последовательности $s$. По свойству геделевой нумерации $(d)_i < d$ для всех $i < l(d)$. Тогда из $l(s) < l(t)$ следует, что $(s)_{(d)_i} = (t)_{(d)_i} = 1$, если $\mathcal M(St_\lng((d)_i)) = 1$, для всех $i < l(d)$. Следовательно, $s \in B_{\mathcal M}$ и $B_{\mathcal M}$ является деревом.

Покажем, что для всякого $k < \omega$ найдется последовательность $t_k \in B_p$ такая, что $l(t_k) = k$. Построим $t_k$ посредством следующей рекурсивной процедуры. Полагаем вначале $t_k = (0, \ldots, 0)$. Затем для каждого $\phi < k$ кладем $(t_k)_\phi = 1$, если $\phi \in A_{\mathcal M}$. Замыкаем $t_k$ относительно выводов длины менее $k$ ($k$-замыкание), заменяя соответствующие нули на единицы. Ясно, что множество $D = \{\sigma | (t_k)_\sigma = 1\}$ совместно. Однако, возможно для некоторых $\phi, \neg\phi < k$ имеет место $\mathcal M(St_\lng(\phi)) = 1$ и $(t_k)_\phi = (t_k)_{\neg\phi} = 0$. Пусть $\phi_1, \ldots, \phi_m$ суть все такие формулы. Для каждого из $2^m$ возможных наборов строим $k$-замыкание $\wedge D \wedge \phi^{s_1}_1 \wedge \ldots \wedge \phi^{s_m}_m$, где $s_i \in {0, 1}$ и $\eta^0 = \neg\eta$, $\eta^1 = \eta$. Для какого-то набора $(s_1, \ldots, s_m)$ может оказаться, что для каких-либо $\psi, \neg\psi < k$ имеет место $\mathcal M(St_\lng(\psi)) = 1$ и $(t_k)_\psi = (t_k)_{\neg\psi} = 
1$. Однако, найдется набор $(s'_1, \ldots, s'_m)$, для которого это не так.
 
Действительно, допустим противное. Имеем $D, \phi^{s_1}_1, \ldots, \phi^{s_m}_m \vdash \bot$ для всех $(s_1, \ldots, s_m) \in \{0, 1\}^m$. Тогда, поскольку $\sys{PC} \vdash \bigvee_{(s_1, \ldots, s_m) \in \{0, 1\}^m} \phi^{s_1}_1 \wedge \ldots \wedge \phi^{s_m}_m \leftrightarrow (\phi_1 \vee \neg\phi_1) \wedge\ldots\wedge (\phi_m \vee \neg\phi_m)$, получаем $D \vdash \bot$, что не так.

Следовательно, положив $(t_k)_{\phi_i} = s'_i$ и $(t_k)_{\neg\phi_i} = 1 - s'_i$ и построив затем $k$-замыкание, мы получим последовательность $t_k \in B_{\mathcal M}$.
\end{proof}

С помощью аксиомы $(WKL)$ получаем, что в дереве $B_{\mathcal M}$ есть путь $p$. В $\sys{RCA}_0$ определим множество $\mathcal T = \{\sigma | (p)_\sigma = 1\}$ Очевидно, $\sys{RCA}_0$ доказывает, что $\mathcal T$ совместно, полно ($\forall \sigma [\mathcal M(St_{\EuScript{L}}(\sigma)) = 1 \to (t)_\sigma = 1 \vee (t)_{\neg\sigma} = 1]$) и дедуктивно замкнуто в смысле \sys{PC}, а также, что выполнено
\begin{equation}\label{eq6}
\forall \phi\, (\mathcal M(\Box^S_0[\phi]) = 1 \to \phi \in \mathcal T) \wedge \forall n\, (F_n \in \mathcal T).
\end{equation}
% \begin{lemma}\label{o-con}
% В $\sys{RCA}_0$ доказуемо, что для всякого конечного множества $E \subset \mathcal T$ и любого $n \in \omega$ выполнено $\mathcal M(\Box^S_n [\neg\bigwedge E]) = 0$.
% \end{lemma}
% \begin{proof}
% Допустим противное. Тогда $\mathcal M(\Box^S_n [\neg\bigwedge E]) = 1$ для некоторых $E$ и $n$. Имеем тогда $\neg\bigwedge E \in \mathcal T$. Получаем, что для достаточно большого $k$ и начального отрезка $s$ длины $k$ пути $p'$ имеем $(s)_{\neg\wedge E} = 1$ и, по дедуктивной $k$-замкнутости, $(s)_{\wedge E} = 1$. Противоречие.
% \end{proof}

Рассмотрим модель $\mathcal N = (\mathcal M, \mathcal T)$ для языка $\lng(\T)$, полагая $\mathcal N(\T(\phi)) = 1 \rightleftharpoons \phi \in \mathcal T$.

\begin{lemma}
В $\sys{RCA}_0$ доказуемо, что в $\mathcal N$ истинны условия Тарского.
\end{lemma}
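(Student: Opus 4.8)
The plan is to check the five Tarski conditions in $\mathcal N$ one by one, each time reducing the condition to the properties of $\mathcal T$ already secured in $\sys{RCA}_0$ by the path construction. Recall what those are: $\mathcal T$ contains only (codes of) genuine $\lng(C\upharpoonright M)$-sentences, it is consistent, complete and deductively closed with respect to $\sys{PC}$, and by~(\ref{eq6}) it contains every $\phi$ with $\mathcal M(\Box^S_0[\phi]) = 1$ together with all the witnessing formulas $F_n$. Since all of these facts are available provably in $\sys{RCA}_0$ and every verification below is elementary (bookkeeping of $\sys{PC}$-derivations already legitimized by the definition of $B_{\mathcal M}$), the whole argument formalizes there. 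Throughout I use that, by definition of $\mathcal N$, the statement $\T[\sigma]$ holds in $\mathcal N$ exactly when $\sigma\in\mathcal T$.

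Conditions~1, 3 and~4 are the routine ones. Condition~1, $\forall x\,(\T(x)\to\St(x))$, is immediate from the first clause of the definition of $B_{\mathcal M}$, which permits $(p)_\sigma = 1$ only when $\mathcal M(\St(\sigma)) = 1$; hence $\mathcal T$ consists of sentences. For condition~3 I would use completeness and consistency together: for a sentence $\alpha$ the clause $(t)_\sigma = 1 - (t)_{\neg\sigma}$ built into $B_{\mathcal M}$ puts exactly one of $\alpha,\neg\alpha$ into $\mathcal T$, which is precisely $\T[\neg\alpha]\leftrightarrow\neg\T[\alpha]$. Condition~4 follows from deductive closure: $\sys{PC}$-derivations of $\alpha$ and of $\beta$ from $\alpha\wedge\beta$, and of $\alpha\wedge\beta$ from $\alpha$ and $\beta$, give $\alpha\wedge\beta\in\mathcal T$ iff $\alpha\in\mathcal T$ and $\beta\in\mathcal T$. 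I note for later use that completeness, consistency and deductive closure also yield the disjunction property: if a finite disjunction lies in $\mathcal T$, so does one of its disjuncts (the contrapositive is a single $\sys{PC}$-derivation collecting the negated disjuncts into a conjunction).

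Condition~2 I would handle through provable $\Delta_0$-completeness of $S\supseteq\sys{PRA}$, that is, the $\lng$-theorem $\forall\alpha\in\Delta_0\,(Tr_0[\alpha]\to\Box_S[\alpha])$, which holds in $\mathcal M$ since $\mathcal M\models Q$. For a $\Delta_0$-sentence $\alpha$ this gives $\mathcal M(Tr_0[\alpha]) = 1 \Rightarrow \mathcal M(\Box^S_0[\alpha]) = 1$, whence $\alpha\in\mathcal T$ by~(\ref{eq6}). Applying the same to the $\Delta_0$-sentence $\neg\alpha$ and invoking consistency yields the converse: from $\mathcal M(Tr_0[\alpha]) = 0$ we get $\mathcal M(Tr_0[\neg\alpha]) = 1$, hence $\neg\alpha\in\mathcal T$ and $\alpha\notin\mathcal T$. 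So $\mathcal M(Tr_0[\alpha]) = 1$ iff $\alpha\in\mathcal T$.

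Condition~5, the quantifier clause, is where the construction of the $F_n$ pays off, and I expect it to be the main obstacle. Reading it in $\mathcal N$, it asserts $\forall v_i\,\alpha\in\mathcal T$ iff $\alpha(v_i/c_u)\in\mathcal T$ for every $u\in M$. The forward implication is easy: a one-step $\sys{PC}$-instantiation of $\alpha(v_i/c_u)$ from $\forall v_i\,\alpha$ together with deductive closure does it. For the converse, suppose $\alpha(v_i/c_u)\in\mathcal T$ for all $u\in M$ while $\forall v_i\,\alpha\notin\mathcal T$; by completeness $\exists v_i\,\neg\alpha\in\mathcal T$. After renaming the bound variable, $\neg\alpha$ occurs as some $\psi_n$ of the $\mathcal M$-recursive enumeration, so $F_n = \bigl(\exists x_n\,\psi_n(x_n)\to\bigvee_{0\le j\le a_n}\psi_n(x_n/c_{nj})\bigr)$ lies in $\mathcal T$ by~(\ref{eq6}); modus ponens and deductive closure give $\bigvee_j\psi_n(c_{nj})\in\mathcal T$, and the disjunction property forces a single witness $\psi_n(c_{nj}) = \neg\alpha(c_{nj})\in\mathcal T$. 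The delicate point — the one I expect to require the most care — is that this witnessing constant is a genuine element constant $c_w$ with $w\in M$: this is forced because $\neg\alpha(c_{nj})\in\mathcal T$ and, by condition~1, $\mathcal T$ holds only of bona fide $\lng(C\upharpoonright M)$-sentences, so all its constants are indexed in $M$ (equivalently, the construction guarantees that the Henkin witnesses supplied by the $F_n$ fall inside $M$). The standing hypothesis then gives $\alpha(c_w)\in\mathcal T$, contradicting consistency. This contradiction establishes $\forall v_i\,\alpha\in\mathcal T$ and completes condition~5.
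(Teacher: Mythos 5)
Your proof is correct and takes essentially the same route as the paper's: condition~2 via provable $\Sigma_1$-completeness together with~(\ref{eq6}) and the consistency of $\mathcal T$, condition~4 via deductive closure, and condition~5 via completeness of $\mathcal T$, the formulas $F_n$ from~(\ref{eq6}) and the disjunction property, ending in a contradiction with consistency. The only differences are cosmetic: you spell out conditions~1 and~3, which the paper treats as immediate from the definition of $B_{\mathcal M}$, and you state the disjunction property explicitly where the paper invokes it implicitly.
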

\begin{proof}\rule{1pt}{0mm}
\begin{enumerate}
\item Допустим $\mathcal M (At_{\EuScript{L}}(\phi)) = 1$. Если $\mathcal M (Tr_0[\phi]) = 1$, то в силу доказуемой $\Sigma_1$-полноты имеем $\mathcal M (\Box^S_0[\phi]) = 1$, откуда по~(\ref{eq6}) получаем $\phi \in \mathcal T$.

Обратно. Имеем $\phi \in \mathcal T$. Допустим $\mathcal M (Tr_0[\phi]) = 0$. Тогда $\mathcal M (\neg Tr_0[\phi]) = \mathcal M ( Tr_0[\neg\phi]) = 1$ вследствие полноты $\mathcal T$. По доказанному заключаем $\neg\phi \in \mathcal T$, что противоречит совместности $\mathcal T$.

\item Пусть $\phi, \psi \in \mathcal T$. Тогда $\phi \wedge \psi \in \mathcal T$ в силу дедуктивной замкнутости $\mathcal T$. Аналогично и в другую сторону.
\item Пусть $\phi[x / d] \in \mathcal T$ для всех $d \in C \upharpoonright M$. Предположим, что $\forall x\, \phi(x) \notin \mathcal T$. Тогда вследствие полноты $\mathcal T$, $\exists x\, \neg\phi(x) \in \mathcal T$. Теперь $\mathcal M$-рекурсивно вычислим $n$, такое что $\neg\phi = \psi_n$. Поскольку $F_n \in \mathcal T$, то имеем $\neg\phi[x / c] \in \mathcal T$ для некоторого $c \in C \upharpoonright M$  в силу дедуктивной замкнутости и полноты. Следовательно, $\mathcal T$ несовместно, что не так. Значит, $\forall x\, \phi(x) \in \mathcal T$. 

Обратно: пусть $\forall x\, \phi(x) \in \mathcal T$. Допустим, что существует $c \in \mathcal C\upharpoonright M$, такая что $\phi[x/c] \notin \mathcal T$. Тогда $\neg\phi[x/c] \in \mathcal T$ вследствие полноты. Отсюда по дедуктивной замкнутости получаем $\exists x\, \neg\phi(x) \in \mathcal T$, что противоречит совместности $\mathcal T$.
\end{enumerate}
\end{proof}

Остается проверить $\mathcal N \models R_{\omega+1}(S)$. Требуемое немедленно следует из~(\ref{eq6}).

% Завершим доказательство теоремы~\ref{ktl}. Допустим, что $P \not\subseteq_\omega Q$. Тогда найдутся модель $\mathcal M'' \models Q$ и формула $\phi \in \Pi_\omega$, такие что $P \vdash \phi$ и $\mathcal M'' \not\models \phi$.  Согласно следствию~\ref{rec_sat_eq} существует модель $\mathcal M'$,

\end{document}